\documentclass[reqno,12pt]{amsart}

\usepackage{graphicx}
\usepackage{amssymb}
\usepackage{amsmath}
\usepackage{enumitem}

\date{}

\theoremstyle{plain}
\newtheorem{theorem}{Theorem}
\newtheorem{corollary}{Corollary}

\theoremstyle{remark}

\newtheorem*{remark}{Remark}

\def\N{\mathbb N}
\def\Z{\mathbb Z}
\def\R{\mathbb R}

\def\phi{\varphi}
\def\leq{\leqslant} % kleinergleich mit parallelen Strichen
\def\geq{\geqslant} % grössergleich mit parallelen Strichen
\def\x{\times}

\title[On families of fibred knots with equal Seifert forms]{On families of fibred knots with\\ equal Seifert forms}

\author{Filip Misev}
\address{Institut de Math\'ematiques I2M, Universit\'e Aix-Marseille, 39 rue F.~Joliot Curie, 13453 Marseille, France}
\email{filip.misev@univ-amu.fr}
\thanks{The author is supported by the Swiss National Science Foundation (\#168676).} %, {\em The plumbing structure of fibre surfaces}.}

\begin{document}
\begin{abstract}
For every genus $g\geq 2$, we construct an infinite family of strongly quasipositive fibred knots having the same Seifert form as the torus knot $T(2,2g+1)$. In particular, their signatures and four-genera are maximal and their homological monodromies (hence their Alexander module structures) agree. On the other hand, the geometric stretching factors are pairwise distinct and the knots are pairwise not ribbon concordant.
\end{abstract}

\maketitle
\thispagestyle{empty}

\section{Introduction and summary of results}
Baker proved in a recent note that two strongly quasipositive fibred knots $K_0$, $K_1$ must be equal if $K_0\#(-K_1)$ is a ribbon knot \cite{Bak}. In~view of Fox's famous slice-ribbon question, this leads to two equally intriguing alternative statements: either there exists a fibred knot of the form $K_0\#(-K_1)$ which is slice but not ribbon, or every knot concordance class contains at most one strongly quasipositive fibred knot.

We describe a simple and general construction proposed by Baader (compare \cite{Ba1}), which gives rise to infinite families of strongly quasipositive fibred knots having the same Seifert form. As~a special case in which it is relatively easy to study the constructed knots in some detail, we obtain the following theorem.

\begin{theorem} \label{thm1}
Let $g\geq 2$. There exists an infinite family of (pairwise nonisotopic) genus $g$ knots $K_n\subset S^3$, $n\in\N$, such that for all $n\in\N$
\begin{enumerate}
\item $K_n$ is strongly quasipositive and fibred,
\item $K_n$ has the same Seifert form as the torus knot $T(2,2g+1)$.
\end{enumerate}
\end{theorem}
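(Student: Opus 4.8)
The plan is to keep the fibre surface fixed and to vary only the monodromy. Write $F$ for the fibre of $T(2,2g+1)$, a once-punctured surface of genus $g$, and let $h_0=\tau_{c_1}\cdots\tau_{c_{2g}}$ be its monodromy, the product of right-handed Dehn twists along the standard chain $c_1,\dots,c_{2g}$ of core curves of the plumbed positive Hopf bands. The first point I would record is that for a fibred knot the Seifert form is already determined by the homological monodromy. Indeed, if $V$ is a Seifert matrix with respect to a basis of $H_1(F)$ and $J=V-V^{T}$ is the intersection form of $F$, then the induced map $h_*$ on $H_1(F)$ satisfies $h_*=(V^{-1})^{T}V$, whence $V^{T}=J\,(h_*-I)^{-1}$; here $h_*-I$ is invertible because $\det(h_*-I)=\pm\Delta_{K}(1)=\pm1$. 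As $J$ is the fixed symplectic intersection form of the genus-$g$ surface, the pair $(J,h_*)$ determines $V$. Therefore, to obtain knots with the Seifert form of $T(2,2g+1)$ it suffices to produce fibred knots with fibre $F$ whose monodromy acts on $H_1(F)$ exactly as $h_0$, which is the content the abstract phrases as ``the homological monodromies agree''.

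For the construction I would fix a simple closed curve $c\subset F$ that is null-homologous on $F$, unknotted in $S^3$, and bounds a disk $D\subset S^3$ with $D\cap F=c$. Let $K_n$ be obtained from $T(2,2g+1)$ by $n$ positive Stallings twists along $c$; equivalently $K_n$ is the binding of the open book with page $F$ and monodromy $h_n=h_0\,\tau_c^{\,n}$. Since $c$ is unknotted, $n$ twists amount to $\mp1/n$-surgery on an unknot and return $S^3$, so $K_n\subset S^3$; and since the twisting takes place in a ball meeting $F$ in a trivial disk, the fibre surface and its genus $g$ are unchanged, giving fibred knots of genus $g$. Because $c$ is null-homologous, $\tau_c$ lies in the Torelli group, so $(h_n)_*=(h_0)_*$, and by the reduction above $K_n$ has the Seifert form of $T(2,2g+1)$, which is point~(2). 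For point~(1), choosing the direction of the twists so that they contribute the positive power $\tau_c^{\,n}$, the monodromy $h_n$ is again a product of right-handed Dehn twists; since fibred knots whose monodromy is a product of right-handed Dehn twists are strongly quasipositive, each $K_n$ is strongly quasipositive and fibred.

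It remains to show that the $K_n$ are pairwise nonisotopic, and this is where I expect the main obstacle to lie: all of them share the same Seifert form, hence the same Alexander polynomial, signature and homological monodromy, so they cannot be separated by any invariant of the Alexander module, and one is forced to use the geometric monodromy. The plan is to arrange $c$ so that $c,c_1,\dots,c_{2g}$ fill $F$ and Thurston's construction (or Penner's criterion) applies, making every $h_n$ pseudo-Anosov with stretching factor $\lambda_n$ equal to, or controlled by, the spectral radius of an explicit nonnegative integer matrix depending on $n$; one then checks that $\lambda_n$ is strictly monotone in $n$. As the conjugacy class of the monodromy is an isotopy invariant of a fibred knot and the stretching factor is a conjugacy invariant, distinct values $\lambda_n$ force the $K_n$ to be pairwise nonisotopic, producing the infinite family. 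The hard part is precisely the verification that the chosen $c$ renders all $h_n$ pseudo-Anosov and that the resulting dilatations are genuinely distinct, a concrete train-track or matrix computation rather than a soft argument; this is also what fails for $g=1$, since the once-punctured torus carrying the fibre of $T(2,3)$ has no essential null-homologous curve to twist along, explaining the hypothesis $g\geq2$.
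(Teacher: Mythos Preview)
Your reduction of the Seifert form to the pair (intersection form, homological monodromy) and your plan to separate the knots by stretching factors are both in line with the paper. The construction itself, however, runs into a genuine obstruction. You ask for an essential null-homologous curve $c$ on the fibre $F$ of $T(2,2g+1)$ which is unknotted in $S^3$; since $c$ is separating, its surface framing is automatically the Seifert framing, so this is exactly the data needed for a Stallings twist returning $S^3$. But $F$ is a quasipositive surface, and on such a surface no essential separating curve is unknotted: the positive-genus subsurface of $F$ that $c$ bounds is again quasipositive (incompressible subsurfaces of quasipositive surfaces are quasipositive) and hence realises the smooth four-ball genus of its boundary, forcing $g_4(c)>0$. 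This is precisely the obstruction the paper points out when explaining why Bonahon's Stallings-twist families fail to be strongly quasipositive. As a side remark, your additional hypothesis $D\cap F=c$ would make $D$ a compressing disk for the incompressible fibre $F$, which is already impossible for essential $c$; but dropping that hypothesis does not save the construction, for the reason just given. Your appeal to ``monodromy is a product of right twists $\Rightarrow$ strongly quasipositive'' would also need justification, though this becomes moot.

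The paper sidesteps the obstruction by \emph{not} twisting on the fibre of $T(2,2g+1)$. It begins one step earlier, with the fibre $S$ of the link $T(2,2g)$, chooses a separating curve $c\subset S$, and uses $t_c^{\,n}$ only to transport a plumbing \emph{arc} $\gamma\subset S$ to $\gamma_n=t_c^{\,n}(\gamma)$; the final positive Hopf band is then plumbed along $\gamma_n$. No surgery is performed, so no framing condition arises, and each $S_n$ is visibly a plumbing of $2g$ positive Hopf bands, hence strongly quasipositive and fibred by the standard results of Rudolph and Stallings. Because the arcs $\gamma_n$ are homologous in $S$, the Seifert forms of the $S_n$ coincide. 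The curves are arranged so that the monodromy is a product of two multi-twists on filling systems, and Thurston's construction together with a Ger\v{s}gorin estimate on the geometric intersection matrix $NN^\top$ shows the dilatations $\lambda_n$ are pairwise distinct.
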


To distinguish the constructed knots, we compare the geometric stretching factors of their monodromies, which can be calculated explicitly for $g=2$. The following corollary is an immediate consequence.

\newpage

\begin{corollary} \label{cor1}
Let $g\geq 2$. There exist infinitely many hyperbolic strongly quasipositive fibred knots of genus $g$, having
\begin{enumerate}
\item maximal signature,
\item periodic homological monodromy,
\item isomorphic Alexander module structures.
\end{enumerate}
\end{corollary}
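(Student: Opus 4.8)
The plan is to extract properties (1)--(3) directly from the coincidence of Seifert forms granted by Theorem~\ref{thm1}(2), and to obtain hyperbolicity separately from the pseudo-Anosov character of the monodromies.

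Let $V$ be a Seifert matrix shared by $K_n$ and $T(2,2g+1)$. The signature is $\operatorname{sign}(V+V^{\mathsf T})$, so $\sigma(K_n)=\sigma(T(2,2g+1))$ has magnitude $2g$; since $|\sigma|\leq 2g_4\leq 2g$ for any genus $g$ knot, this value is maximal, which gives~(1). For a fibred knot the homological monodromy $(\phi_n)_*$ on $H_1$ of the fibre is determined by $V$ (up to the usual conventions it is $V^{-1}V^{\mathsf T}$, with characteristic polynomial the Alexander polynomial). Since the monodromy of $T(2,2g+1)$ is periodic---equivalently, the zeros of $\Delta_{T(2,2g+1)}(t)=(t^{2g+1}+1)/(t+1)$ are roots of unity---the matrix $(\phi_n)_*$ has finite order, which gives~(2). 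Finally, the Alexander module of a fibred knot is $H_1(\text{fibre})$ with $t$ acting as $(\phi_n)_*$; as these actions all agree with the one for $T(2,2g+1)$, the modules are isomorphic, which gives~(3).

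For hyperbolicity I would use that, by Theorem~\ref{thm1}(1), the complement of $K_n$ is the mapping torus of $\phi_n$ on the once-punctured genus $g$ surface, and by Thurston this is hyperbolic exactly when $\phi_n$ is pseudo-Anosov. For $g=2$ the construction presents $\phi_n$ as an explicit product of right-handed Dehn twists, and I would compute its geometric stretching factor $\lambda_n$ from an invariant train track, obtaining $\lambda_n>1$ together with $\lambda_m\neq\lambda_n$ for $m\neq n$; this at once shows each $\phi_n$ is pseudo-Anosov and that the knots are pairwise distinct. For general $g\geq 2$ I would instead argue that $\phi_n$ is pseudo-Anosov by excluding the periodic case (which would force $K_n$ to be a torus knot with the prescribed Seifert form, hence $T(2,2g+1)$, possible for at most one $n$) and the reducible case (which would make $K_n$ a satellite), for instance by checking that the twisting curves of the strongly quasipositive presentation fill the fibre. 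Either way the $K_n$ are hyperbolic, and there are infinitely many by Theorem~\ref{thm1}.

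The crux, and the main obstacle, is the stretching factor itself. Because the homological monodromy is periodic, its spectral radius equals $1$, so the dilatation $\lambda_n$ is entirely invisible at the level of homology and no Seifert-form datum can detect it. One is therefore forced to compute $\lambda_n$ from genuinely geometric information about the action of $\phi_n$ on the fibre, which is precisely what makes the explicit $g=2$ calculation delicate---and what allows the $K_n$ to be geometrically distinct while remaining indistinguishable by every classical invariant coming from the Seifert form.
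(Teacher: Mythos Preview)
Your derivation of (1)--(3) from the equality of Seifert forms is correct and essentially identical to the paper's: the signature is $\operatorname{sign}(V+V^{\mathsf T})$, the homological monodromy is determined by $V$ (the paper writes it as $A^{-\top}A$), and the Alexander module of a fibred knot is $H_1$ of the fibre with $t$ acting by that monodromy.

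The gap is in your treatment of hyperbolicity for general $g$. In the paper this is handled inside the proof of Theorem~\ref{thm1}, uniformly for all $g\geq 2$: the monodromy $\phi_n$ is arranged as a product of two positive multi-twists along filling multicurves $\{\alpha_i\}$ and $\{\beta_j\}$, so Thurston's construction applies, and $\phi_n$ is pseudo-Anosov precisely when the associated $2\times 2$ matrix is hyperbolic, i.e.\ when the top eigenvalue $\mu_n$ of the geometric intersection product $NN^{\top}$ exceeds $4$. A Ger\v{s}gorin disc estimate then gives $16n^2-4n\leq\mu_n$, so $\phi_n$ is pseudo-Anosov for every $n\neq 0$ and the stretching factors $\lambda_n$ are pairwise distinct. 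No separate treatment of $g=2$ is needed.

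Your alternative route for general $g$---rule out the periodic case (torus knot, hence at most one $n$) and the reducible case (satellite), invoking that the twist curves fill---does not close. Filling is merely the hypothesis that makes Thurston's representation into $PSL(2,\R)$ available; it does not by itself exclude the parabolic case $|2-\mu_n|=2$, and you give no independent argument that $K_n$ is not a satellite. One still has to bound $\mu_n$ away from $4$. The ``genuinely geometric information'' you flag in your last paragraph is precisely this eigenvalue estimate, and the paper carries it out for every $g$, not only for $g=2$.
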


Strongly quasipositive links were introduced and studied by Rudolph \cite{Ru1} and generalise links of singularities, positive braid closures, and positive links \cite{Ru2}. They are characterised as the boundaries of incompressible subsurfaces of positive torus link fibres \cite{Ru0}, form a rather large class of knots representing every possible Seifert form \cite[\S 3]{Ru} and behave naturally with respect to fibredness: connected sum, plumbing and positive cabling preserve strong quasipositivity.

Families of fibred knots with equal Seifert forms have been studied before in connection with (ribbon) concordance. For example, Bonahon used Stallings twists to construct a family of fibred genus two knots $K_n$, $n\in\N$, whose homological monodromies are pairwise conjugate and such that $K_n\#(-K_m)$ is not a ribbon knot for $n\neq m$ (compare~\cite{Bo1}, see also \cite{Bo2}). However, these knots are not strongly quasipositive. In~fact, a quasipositive surface cannot contain an essential zero-framed annulus \cite{Ru3}, which is required for a Stallings twist. To~prove that the knots $K_n\#(-K_m)$ are not ribbon, Bonahon applied the following criterion of Casson and Gordon \cite[Theorem~5.1]{CaGo}.
\begin{center}
{\em A fibred knot in a homology 3-sphere is homotopically ribbon if and only if its closed monodromy extends over a handlebody.}
\end{center}

The notion of a knot being homotopically ribbon is a weakened version of ribbonness introduced by Casson and Gordon. If $S$ is a compact orientable surface with boundary, a diffeomorphism $\phi:S\to S$ (fixing the boundary of $S$ pointwise) is said to extend over a handlebody if and only if there exists a three-dimensional handlebody $W$ such that $S\subset\partial W$, $\partial W\setminus S$ is a union of discs, and $\phi$ is the restriction of a diffeomorphism of $W$.

\begin{corollary}
Let $K_n$, $n\in\N$, be the family of knots constructed in the proof of Theorem~\ref{thm1}. The following holds for all $i,j\in\N$, $i\neq j$.
\begin{enumerate}
\item $K_i$ and $K_j$ are not ribbon concordant,
\item $K_i$ and $K_j$ are not homotopy ribbon concordant,
\item the closed monodromy of $K_i\#(-K_j)$ does not extend over a handlebody,
\item the smooth four-ball genus of $K_i\#(-K_j)$ is at most one.
\end{enumerate}
\end{corollary}

\section{Families of fibred knots}

We first describe an explicit construction of a family of knots $K_n$, $n\in\N$, having the properties stated in Theorem~\ref{thm1}. To prove that the knots are indeed pairwise nonisotopic, we compute the geometric dilatation $\lambda_n\in\R$ of the monodromy of $K_n$. For this purpose, we choose the construction in such a way that the monodromy is represented by a composition of two multi-twists, that is, products of Dehn twists on sets of disjoint curves.

\begin{proof}[Proof of Theorem~\ref{thm1} -- Construction of the knots $K_n$]
Let $S\subset S^3$ be the fibre surface of the $T(2,2g)$ torus link, viewed as a plumbing of $2g-1$ positive Hopf bands $H_1,\ldots,H_{2g-1}$ according to the tree \includegraphics{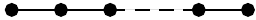}. We choose the plumbing order as follows. First let $S'=H_1\#\ldots\#H_g$ be the connected sum of the first $g$ bands. The monodromy of $S'$ is a composition of positive Dehn twists along $g$ pairwise disjoint simple closed curves, namely the core curves $\alpha_1,\ldots,\alpha_g$ of $H_1,\ldots,H_g$. Denote $\beta_1,\ldots,\beta_{g-1}$ the cores of the remaining Hopf bands $H_{g+1},\ldots,H_{2g-1}$, respectively. The latter may now be plumbed to $S'$ from below such that $\alpha_1,\beta_1,\alpha_2,\beta_2,\ldots,\alpha_{g-1},\beta_{g-1},\alpha_g$ form a chain, that is, $\beta_i$ intersects each of its neighbours $\alpha_i$ and $\alpha_{i+1}$ transversely in one point (for all $i=1,\ldots,g-1$), and there are no other intersections (see Figure~\ref{fig:Sg}).
\begin{figure}[ht]
\includegraphics[scale=0.5]{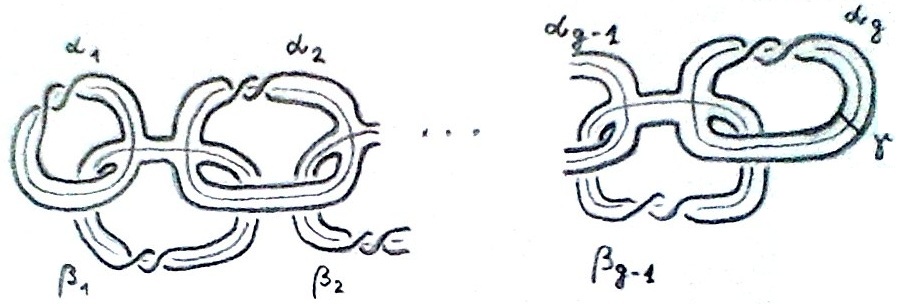}
\caption{The fibre surface $S$ of the torus link $T(2,2g)$ and core curves of the plumbed Hopf bands.}
\label{fig:Sg}
\end{figure}
Denote $S$ the resulting surface, which is indeed the fibre surface of $T(2,2g)$, since the plumbing order is irrelevant for arborescent Hopf plumbings. Now let $\gamma\subset H_{2g-1}\subset S$ be a proper arc which intersects $\alpha_g$ transversely in one point and does not intersect any of the other curves. Further let $c\subset S$ be the boundary of a (small) regular neighbourhood of $\alpha_{g}\cup\beta_{g-1}$ in $S$ and denote $t_c$ the positive Dehn twist on $c$. Note that $t_c$ acts as the identity on homology since $c$ is nullhomologous in $S$, by construction (see Figure~\ref{fig:S2}).
\begin{figure}[ht]
\includegraphics[scale=0.5]{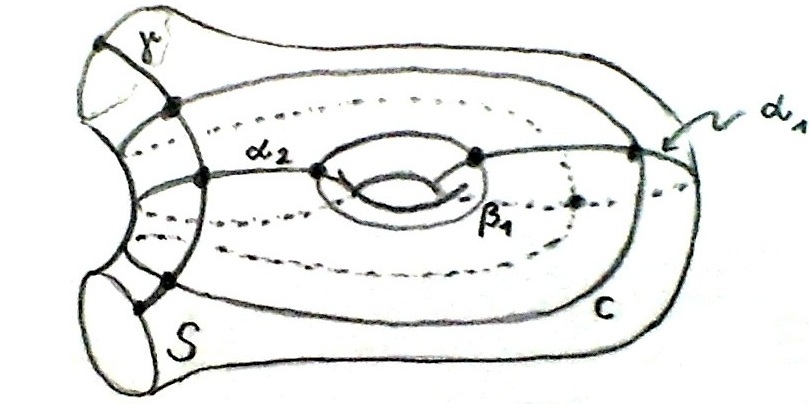}
\caption{Position of the curves in the abstract surface $S$ for $g=2$.}
\label{fig:S2}
\end{figure}
For $n\in\N$, define the proper arc $\gamma_n=t_c^n(\gamma)$. Finally, let $S_n$ be the surface obtained by plumbing a positive Hopf band $H_{2g}$ along $\gamma_n$ to $S$ from below and denote $\beta_{g,n}$ the core curve of $H_{2g}$ in $S_n$. The monodromy $\phi_n:S_n\to S_n$ is given by
\[ \phi_n=(t_{\beta_{g,n}}\circ t_{\beta_{g-1}}\circ\ldots\circ t_{\beta_1})\circ (t_{\alpha_g}\circ\ldots\circ t_{\alpha_1}). \]
In other words, $\phi_n$ is a composition of two positive multi-twists along systems of $g$ disjoint curves.
Each complementary region of the union $\alpha_1\cup\ldots\cup\alpha_g\cup\beta_1\cup\ldots\cup\beta_{g-1}\cup\beta_{g,n}$ is either a boundary parallel annulus or a polygon which has at least three sides (corresponding to sub-arcs of the curves $\alpha_i$, $\beta_i$). Therefore, the curves fill up the surface $S$ and they realise their geometric intersection number (apply the bigon criterion of \cite{FaMa}). Moreover their union is connected. This fits the setting of Thurston's classical construction of invariant measured foliations for products of multi-twists (compare~\cite{Thu}). To compute the geometric dilatation $\lambda_n$ of $\phi_n$, consider the $g\x g$ geometric intersection matrix
\[ N= \left[ \begin{array}{ccccc} 1&&&& \\ 1&1&&& \\ &\ddots &\ddots && \\ &&1&1&4n \\  &&&1&1 \end{array} \right] \]
whose entry $N_{ij}$ is given by the number of intersection points (counted without sign) between $\alpha_i$ and $\beta_j$. Note that $\gamma\cap c$ and $\alpha_{g-1}\cap c$ consist of two points each, which implies that $\gamma_n$ (and therefore $\beta_{g,n}$) intersects $\alpha_{g-1}$ in $4n$ points. Let $\mu_n$ be the largest eigenvalue of the symmetric matrix $NN^\top$, which is of the following form for $g\geq 3$.
\[ NN^\top = \left[ \begin{array}{ccccll} 1&1&&&& \\ 1&2&&&& \\ &&\ddots &1&& \\ &&1&2&1& \\ &&&1&2+16n^2&1+4n \\ &&&&1+4n&2 \end{array} \right] \]
By a classical theorem of Ger\v{s}gorin \cite{Ger}, the eigenvalues of a matrix $A$ are contained in the union of the discs in the complex plane with centers $A_{ii}$ and radii $\sum_{j\neq i} |A_{ij}|$ and if a disc is disjoint from the others, it must contain precisely one eigenvalue. For $A=NN^\top$, we see that the Ger\v{s}gorin disc of radius $2+4n$ centered at the largest diagonal entry $2+16n^2$ is disjoint from all others if $n\neq 0$. Therefore
\[ 16n^2-4n \ \leq \ \mu_n \ \leq \ 16n^2+4n+4,\quad \forall n\neq 0, \]
hence the $\mu_n$ are pairwise distinct and unbounded. By Thurston's construction, the map $\phi_n$ is pseudo-Anosov if and only if the following $2\x 2$ matrix representing $\phi_n$ is hyperbolic.
\[ T_{\underline{\beta}}\cdot T_{\underline{\alpha}}=\left[ \begin{array}{cc} 1&0 \\ -\mu_n^{1/2}&1 \end{array} \right] \cdot \left[ \begin{array}{cc} 1&\mu_n^{1/2} \\ 0&1 \end{array} \right] = \left[ \begin{array}{cc} 1&\mu_n^{1/2} \\ -\mu_n^{1/2} & 1-\mu_n \end{array} \right] \]
Its eigenvalues are the geometric stretching factors $\lambda_n^{\pm 1}$ of $\phi_n$. A quick computation yields
\[ \lambda_n^{\pm 1} = \frac12 (2-\mu_n \mp\sqrt{\mu_n^2-4\mu_n}) \]
(we chose the signs such that $|\lambda_n|\geq 1$ for $n\neq 0$). In particular, $|\lambda_n|\to\infty$ as $n\to\infty$ and the $\lambda_n$ are pairwise distinct. This implies that the knots $K_n\! :=\partial S_n$ are pairwise distinct and hyperbolic for $n\neq 0$. For $n=0$, we have $K_0=T(2,2g+1)$, $\phi_0$ is a periodic mapping class and $\lambda_0$ is a root of unity.

By construction, $S_n$ is a plumbing of $2g$ positive Hopf bands. Since a plumbing of positive Hopf bands is strongly quasipositive \cite{Ru1} and fibred \cite{Sta}, $K_n$ has the same properties, for all $n\in\N$.

It remains to show that the Seifert forms of the surfaces $S_n$ agree, up to the obvious identification of $H_1(S_n,\Z)=H_1(S,\Z)\oplus\langle\beta_{g,n}\rangle$ with $H_1(S_m,\Z)=H_1(S,\Z)\oplus\langle\beta_{g,m}\rangle$. The Seifert forms clearly agree on $H_1(S,\Z)$, since the $S_n$ are all given by plumbing on $S$. Now let $\delta\subset S$ be a closed oriented curve and denote $\delta^*$ a slight push-off along the (positive or negative) direction normal to $S$. We have to prove that the linking numbers of $\beta_{g,n}$ and $\beta_{g,m}$ with $\delta^*$ agree. This follows from the fact that $\beta_{g,n}$ and $\beta_{g,m}$ bound discs whose interiors are disjoint from $S$, hence the linking numbers with $\delta^*$ are given by the algebraic intersection numbers between $\delta$ and $\beta_{g,n}$, $\beta_{g,m}$, respectively (compare Figure~\ref{fig:delta}). The latter agree since the arcs $\gamma_n$ and $\gamma_m$ are homologous in $S$ by construction.
\end{proof}

\begin{figure}[ht]
\includegraphics[scale=0.4]{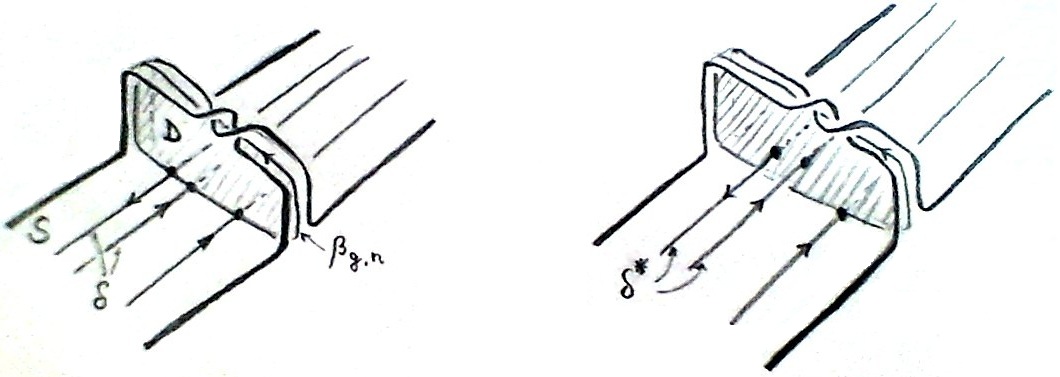}
\caption{The linking number of $\delta^*$ and $\beta_{g,n}$ equals the algebraic intersection number of $\delta$ and $\beta_{g,n}\cap S$.}
\label{fig:delta}
\end{figure}

\setcounter{corollary}{0}
\begin{corollary}
Let $g\geq 2$. There exist infinitely many hyperbolic strongly quasipositive fibred knots of genus $g$, having
\begin{enumerate}
\item maximal signature,
\item periodic homological monodromy,
\item isomorphic Alexander module structures.
\end{enumerate}
\end{corollary}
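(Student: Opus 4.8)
The plan is to read off all three properties directly from Theorem~\ref{thm1}, since each is an invariant of the Seifert form (or of the homological monodromy, which is itself determined by the Seifert form of a fibred knot). First I would fix the infinite family $K_n$, $n\geq 1$, produced in the proof of Theorem~\ref{thm1}. These are pairwise nonisotopic by the computation showing the dilatations $\lambda_n$ are pairwise distinct with $|\lambda_n|\to\infty$, and they are hyperbolic for $n\neq 0$ because their monodromies $\phi_n$ were shown to be pseudo-Anosov via Thurston's construction. This already furnishes infinitely many hyperbolic strongly quasipositive fibred knots of genus $g$, so it only remains to verify the three listed equalities of invariants.

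For the signature I would recall that $\sigma(K)$ is the signature of the symmetrised Seifert form $V+V^\top$, and hence depends only on the Seifert form. Since $K_n$ has the same Seifert form as $T(2,2g+1)$ by Theorem~\ref{thm1}(2), we obtain $\sigma(K_n)=\sigma(T(2,2g+1))$, whose modulus is $2g$. As the symmetrised Seifert matrix of a genus $g$ knot is $2g\times 2g$, its signature satisfies $|\sigma|\leq 2g$, so this common value is maximal.

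For the homological monodromy the key point is that for a fibred knot the Seifert matrix $V$ is unimodular and the action of the monodromy on $H_1$ of the fibre is $V^{-1}V^\top$ (up to the usual conventions), hence again a function of the Seifert form alone. Therefore the homological monodromy of $K_n$ coincides with that of $T(2,2g+1)$. Because the monodromy of a torus knot is a periodic mapping class, as already noted for $\phi_0$ in the proof of Theorem~\ref{thm1}, its action on homology has finite order, which gives periodicity. Finally, for a fibred knot the Alexander module is $H_1$ of the fibre equipped with the $\Z[t,t^{-1}]$-module structure in which $t$ acts by the homological monodromy, so the coincidence of homological monodromies established above yields isomorphic Alexander module structures.

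I expect essentially no genuine obstacle here beyond bookkeeping, which is consistent with the statement being an immediate consequence of Theorem~\ref{thm1}. The only points requiring care are the sign and normalisation conventions relating $V$, the monodromy $V^{-1}V^\top$, and the signature $\operatorname{sign}(V+V^\top)$, together with the elementary verification that $|\sigma|=2g$ is indeed the maximum attainable for a genus $g$ knot. Everything else follows at once from Theorem~\ref{thm1} and the pseudo-Anosov and dilatation analysis carried out in its proof.
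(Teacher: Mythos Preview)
Your proposal is correct and follows essentially the same approach as the paper: both read off (1)--(3) from the equality of Seifert forms in Theorem~\ref{thm1}, using that signature is determined by the symmetrised Seifert form, that the homological monodromy of a fibred knot is determined by the Seifert matrix via a formula of the type $M=A^{-\top}A$ (the paper cites \cite[Lemma~8.3]{Sav}), and that the Alexander module is determined by this homological action. Your extra sentence explaining why $|\sigma|\leq 2g$ makes $2g$ the maximal value is a welcome clarification the paper leaves implicit.
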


\begin{proof}
In the proof of Theorem~\ref{thm1} we constructed an infinite family of strongly quasipositive fibred genus $g$ knots $K_n$, $n\in\N$, such that $K_n$ is hyperbolic for $n\neq 0$ and has the same Seifert form as $K_0=T(2,2g+1)$. Since $T(2,2g+1)$ has maximal signature, this immediately implies~$(1)$. The homological monodromy $M$ of a fibre surface is related to its Seifert form $A$ by the formula $M=A^{-\top}A$ (compare \cite[Lemma~8.3]{Sav}). Hence $K_n$ has the same homological monodromy as $K_0$, which is periodic. Since the Alexander module structure of a fibred knot is determined by the homological action of the monodromy, this proves $(2)$ and $(3)$.
\end{proof}

\begin{corollary}
Let $K_n$, $n\in\N$, be the family of knots constructed in the proof of Theorem~\ref{thm1}. The following holds for all $i,j\in\N$, $i\neq j$.
\begin{enumerate}
\item $K_i$ and $K_j$ are not ribbon concordant,
\item $K_i$ and $K_j$ are not homotopy ribbon concordant,
\item the closed monodromy of $K_i\#(-K_j)$ does not extend over a handlebody,
\item the smooth four-ball genus of $K_i\#(-K_j)$ is at most one.
\end{enumerate}
\end{corollary}

\begin{remark}
Note that homotopy ribbon concordance is (a priori) not a symmetric relation. Here, we say that two knots are homotopy ribbon concordant if one of them is homotopy ribbon concordant to the other.
\end{remark}

\begin{proof}
By \cite[Lemma~3.4]{Gor}, two homotopy ribbon concordant strongly quasipositive fibred knots have to be equal, hence (2). By~a theorem of Casson and Gordon, (2) $\Leftrightarrow$ (3) for fibred knots (see \cite[Theorem~5.1]{CaGo}). The implication (2) $\Rightarrow$ (1) holds for all knots. Assertion (4) essentially follows from the fact that the surface $S_i\#(-S_j)$ is given by plumbing two Hopf bands to the surface $S\#(-S)$, which is ribbon. More precisely, we can find a zero-framed unlink with unknotted components $L_1,\ldots,L_{2g}\subset S_i\#(-S_j)$ which are realised as the embedded connected sum of the copies of the curves $\alpha_k$, $\beta_k$ in $S$ and $-S$, respectively. Cut $S_i\#(-S_j)$ along $L_k$ and glue two ribbon discs back in, for each $k$. Push the interior of the resulting surface slightly into the four-ball to obtain a ribbon surface of genus one.
\end{proof}

\section{Further properties of the knots $K_n$}
The subsequent statements refer to the family of knots $K_n$ of a fixed genus $g\geq 2$ constructed in the proof of Theorem~\ref{thm1}.
\begin{enumerate}
\item Since the signature of a knot is a lower bound for the topological four-ball genus, we also have that $g_4^{top}(K_n)=g_4(K_n)=g(K_n)$. In particular, the $K_n$ are not slice (see also \cite{Ru3}).
\item The equality of the Seifert forms also implies that the Levine-Tristram signature functions of the knots $K_n$ agree.
\item The Alexander polynomial of $K_n$ is equal to the Alexander polynomial of $T(2,2g+1)$ and the knots $K_n\#(-K_m)$ satisfy the Fox-Milnor condition.
\item By work of Hedden \cite{Hed}, the concordance invariant $\tau$ is maximal for the knots $K_n$, that is, $\tau(K_n)=g(K_n)=g$. Moreover, the open book associated to $K_n$ supports the tight contact structure of $S^3$, so the geometric monodromy of $K_n$ is right-veering \cite{HKM}.
\item The knots $K_n$ are all prime since their geometric monodromies are irreducible (they are pseudo-Anosov for $n\neq 0$).
\item For $n\neq 0$, $K_n$ cannot be represented by a positive braid. In fact, positive braids of maximal signature have been classified by Baader \cite{Ba2}. They all have periodic (geometric) monodromy. All but finitely many of the $K_n$ (probably all but $K_0$) cannot be represented by a positive knot diagram. This follows from the fact that the number of positive knots of a fixed genus and fixed signature function is finite; compare \cite[Proof of Theorem~1]{BDL}.
\item Similarly, all but finitely many of the $K_n$ are non-alternating, since the number of alternating links of a given Alexander polynomial is finite by work of Stoimenow \cite[Corollary~3.5]{Sto}.
\item Except for a finite number of indices, the fibre surfaces of the $K_n$ cannot be Hopf-plumbed baskets (given by Hopf plumbing along arcs on a fixed disc \cite{Ru4}). In fact, the number of Hopf-plumbed baskets of a given genus is finite. This suggests that baskets should be thought of as rare exceptions among quasipositive fibre surfaces, at least from the point of view of general Hopf plumbing. See also \cite{Ban}.
\end{enumerate}

\end{document}